\definecolor{hot}{RGB}{65,105,225}
\theoremstyle{plain}
\newtheorem{theorem}{Theorem}[]
\newtheorem{prop}[theorem]{Proposition}
\newtheorem{lm}[theorem]{Lemma}
\newtheorem{cor}[theorem]{Corollary}
\newtheorem{lemma}[theorem]{Lemma}
\newtheorem{thrm}[theorem]{Theorem}
\theoremstyle{definition}
\newtheorem{defn}[theorem]{Definition}
\newtheorem{rmk}[theorem]{Remark}
\newtheorem{ex}[theorem]{Example}
\newtheorem*{ex*}{Example}
\newtheorem{question}[theorem]{Question}
\def\be{\begin{equation}}
\def\ee{\end{equation}}
\def\bt{\begin{thrm}}
\def\et{\end{thrm}}
\def\bc{\begin{cor}}
\def\ec{\end{cor}}
\def\br{\begin{rmk}}
\def\er{\end{rmk}}
\def\bp{\begin{prop}}
\def\ep{\end{prop}}
\def\bl{\begin{lm}}
\def\el{\end{lm}}
\def\bex{\begin{ex}}
\def\eex{\end{ex}}
\def\bd{\begin{defn}}
\def\ed{\end{defn}}
\newcommand\sO{{\mathcal O}}
\newcommand\sH{{\mathcal H}}
\newcommand\sF{{\mathcal F}}
\DeclareMathOperator{\id}{id}                    
\DeclareMathOperator{\coker}{coker}
\DeclareMathOperator{\Ext}{Ext}
\def\bC{\mathbb{C}}
\def\bP{\mathbb{P}}
\def\bA{\mathbb{A}}
\def\bQ{\mathbb{Q}}
\def\bZ{\mathbb{Z}}
\def\bD{\mathbb{D}}
\title[Lyubeznik numbers of irreducible projective varieties]{Lyubeznik numbers of irreducible projective varieties depend on the embedding}
\author{Botong Wang}
\address{Department of Mathematics,         University of Wisconsin-Madison,  480 Lincoln Drive, Madison WI 53706-1388, USA.}
\email {wang@math.wisc.edu}
\date{\today}
\keywords{}
\subjclass[2010]{32S60, 14F17, 14F05, 55N25}
\begin{document}

\maketitle
\begin{abstract}
We construct irreducible complex projective varieties such that the Lyubeznik numbers of their affine cones depend on the choices of projective embeddings. The main ingredient is the recent work of Reichelt-Saito-Walther, where the Lyubeznik numbers are reinterpreted using perverse sheaves and reducible projective varieties with dependent Lyubeznik numbers are constructed. 
\end{abstract}


\section{Introduction}\label{intro}
In the seminal paper \cite{L}, Lyubeznik studied finiteness properties of local cohomology modules. While the article was primarily concerned with the characteristic zero case, when combined with earlier results of Huneke and Sharp \cite{HS} it allowed Lyubeznik to
define a set of numerical invariants attached to a point on the spectrum of a finitely generated algebra over any field by way of local cohomology. These numbers became
known as Lyubeznik numbers and can be determined by viewing the given
ring as a quotient of a finite polynomial ring over the base field. Lyubeznik's
remarkable theorem is that the numbers are invariants of the original ring, and does not depend on the presentation.


While defined in purely algebraic ways, it was realized early on that in characteristic zero these numbers often had something to do with topology. For the vertex of a cones over smooth projective varieties, Garcia-Lopez and Sabbah \cite{GLS} gave a formula in terms of differences of Betti numbers of the projective variety. This idea was sharpened later
by Switala \cite{S}. Independently, Walther \cite{W} and Kawasaki \cite{K} studied the relation of Lyubeznik numbers with the  topology of the punctured spectrum for local rings of Krull dimension two. Recently, Lorincz and Raicu \cite{LR} obtained combinatorial formulas of the Lyubeznik numbers for determinantal rings. See \cite{NWZ} for a recent survey of the subject. 

In this paper, we study the Lyubeznik numbers of the local rings of an affine cone at the cone point. Let $X$ be a complex projective variety, and let $C(X)\subset \bA_{\bC}^{n+1}$ be the affine cone of $X$ with respect to some projective embedding $X\to \bP_\bC^n$. Denote the coordinate ring of $\bA_\bC^{n+1}$ by $R$, and denote the defining ideal of $C(X)$ by $I$. The Lyubeznik numbers $\lambda_{k, j}(C(X))$ of the affine cone $C(X)$ are defined by
\[
\lambda_{k,j}(C(X))\stackrel{\text{def}}{=}\dim_\bC \Ext^k_R (\bC, H^{n-j}_I R)
\]
for $k, j\in \bZ_{\geq 0}$. An early question, predating the year 2000, was whether the Lyubeznik numbers $\lambda_{k,j}(C(X))$ depend on the choice of the projective embedding of $X$. 

In positive characteristic, it is proved by Wenliang Zhang \cite{Z} that the Lyubeznik numbers of $C(X)$ depend only on $X$, not the choice of embedding. In \cite{GLS} and \cite{S}, the same result is proved for smooth projective varieties in characteristic zero.  More recently, Thomas Reichelt, Morihiko Saito and Uli Walther \cite{RSW} constructed reducible complex projective varieties $X$ such that the Lyubeznik numbers of $C(X)$ depend on the choices of projective embeddings. In this paper, we extend their result to irreducible projective varieties.

\begin{defn}
	Let $X$ be a complex projective variety. We say that $X$ has \textbf{dependent $(k, j)$-Lyubeznik number}, if the Lyubeznik number $\lambda_{k, j}(C(X))$ of the affine cone of $X$ depend on the choice of the projective embedding. We say that $X$ has \textbf{dependent Lyubeznik numbers}, if $X$ has dependent $(k, j)$-Lyubeznik number for some $k, j$. 
\end{defn}
The main result of this paper is the following. 
\begin{theorem}\label{thm_main}
There exist irreducible projective varieties with dependent Lyubeznik numbers. 
\end{theorem}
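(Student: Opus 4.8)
The plan is to start from the Reichelt-Saito-Walther construction of a reducible projective variety with dependent Lyubeznik numbers and to upgrade it to an irreducible one. The natural strategy is to understand exactly which topological/Hodge-theoretic invariant of the embedding the RSW examples detect, and then to realize the same invariant using an irreducible variety. Since the Lyubeznik numbers of a cone are governed by the perverse-sheaf-theoretic data of $X$ together with how the hyperplane-at-infinity class interacts with the intersection cohomology (via the RSW reinterpretation), I would first extract from their work the precise dictionary: $\lambda_{k,j}(C(X))$ is computed from dimensions of certain cohomology groups of $X$ built out of $\IC$-sheaves and the Lefschetz action of the ample class coming from the embedding. The dependence on the embedding in their examples comes from the fact that different embeddings give different Lefschetz/weight filtration behavior.

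First I would set up the reduction precisely: given a reducible example $X_0 = X_1 \cup X_2$ with dependent $(k,j)$-Lyubeznik number coming from RSW, I want to produce an irreducible $X$ whose relevant cohomological invariants agree with those of $X_0$ up to a controlled correction. The cleanest route is to take a suitable irreducible variety that maps to (or is a modification/gluing of) the reducible example, so that the difference between their intersection cohomology and the ordinary cohomology entering the Lyubeznik formula is embedding-independent. Concretely, I would look for an irreducible $X$ admitting a finite surjective or birational morphism onto $X_0$, or build $X$ by taking a generic hyperplane/projection trick so that its normalization or its decomposition theorem pieces reproduce the $\IC$-contributions that carry the embedding dependence. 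The key technical point is that the embedding dependence lives in a graded piece of the cohomology of $X$ (say a specific Hodge/Lefschetz summand), and this piece must survive the passage from reducible to irreducible.

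The main steps, in order, would be: (1) record the RSW formula expressing $\lambda_{k,j}(C(X))$ in terms of the perverse cohomology of $X$ and the primitive/Lefschetz decomposition induced by the embedding; (2) isolate the exact summand $V$ of that cohomology on which two chosen embeddings of the RSW example disagree, and verify $\dim V$ enters some $\lambda_{k,j}$ with a nonzero coefficient; (3) construct an explicit irreducible projective variety $X$ — for instance by attaching the relevant cycles along a subvariety, or by taking an appropriate hypersurface/complete-intersection realization, or by a ramified cover — whose $\IC$ or ordinary cohomology carries an isomorphic copy of $V$ with the same embedding dependence; (4) check that no other $\lambda_{k',j'}$ collapses the dependence, i.e. that the embedding-sensitive contribution is not cancelled by an embedding-insensitive term; and (5) conclude that $X$ is irreducible with a dependent Lyubeznik number, proving Theorem~\ref{thm_main}.

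The hard part will be step (3) together with step (4): irreducibility is a global constraint that is at odds with the ``local gluing'' flavor of the RSW reducible examples, so I expect the main obstacle to be producing an irreducible $X$ that still carries the delicate embedding-dependent cohomological summand \emph{without} the irreducibility forcing extra, embedding-independent cohomology that washes out the dependence or shifts it into a $\lambda_{k,j}$ where it is no longer detectable. I anticipate that controlling the decomposition theorem for whatever morphism relates $X$ to the reducible model — ensuring the summand $V$ appears as a direct summand and tracking the weight/Lefschetz data precisely through that decomposition — is where the real work lies, and I would spend most of the effort verifying that the resulting Lyubeznik number genuinely differs between the two embeddings.
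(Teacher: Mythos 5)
Your plan correctly identifies the overall strategy --- transfer the embedding--dependence detected by the Reichelt--Saito--Walther reducible examples to an irreducible variety --- but it stops short of the actual content of the proof, and two of its concrete suggestions would not work. First, an irreducible variety cannot admit a finite surjective or birational morphism \emph{onto} a reducible one (its image is irreducible), so that route is a dead end; the paper goes in the opposite direction, embedding the reducible RSW example $X$ \emph{into} an irreducible $Y$ as its singular locus. The construction that does the work is a self-gluing: realize $X$ as the zero locus of a section $s$ of a vector bundle $E$ on $\bP^{N_1}\times\bP^{N_2}$, form the smooth irreducible family $Y'=\{(\eta,\lambda):\eta\in\Gamma(\lambda s)\}$ over $\bC$, and push it forward along the normalization of a nodal curve so that the fibers over $\lambda=\pm1$ are identified exactly along $\Gamma(s)\cap\Gamma(-s)=X$. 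The normalization $q:\tilde Y\to Y$ then yields the short exact sequence $0\to\bQ_Y\to q_*\bQ_{\tilde Y}\to\bQ_{f(X)}\to 0$, and since $\tilde Y$ is smooth and $q$ finite, one gets $^p\sH^i(\bD\bQ_Y)\cong\,^p\sH^{i+1}(\bD\bQ_{f(X)})$ for $i>-\dim Y$ directly --- no decomposition theorem or weight/Lefschetz bookkeeping is needed, contrary to where you expected the difficulty to lie.

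The more serious omission is the ample-class constraint, which your proposal does not address at all. ``Lefschetz dependent'' means the relevant kernel/cokernel dimensions change as the ample class on $X$ varies between two specific classes $A_1,A_2$; for the irreducible $Y$ to have dependent Lyubeznik numbers you must produce two ample classes $B_1,B_2$ on $Y$ whose restrictions to the embedded copy of $X$ are (multiples of) $A_1$ and $A_2$. This is the real technical point: the paper arranges it by embedding $X$ via the modified very ample classes $A_1'=mA_1-A_2$ and $A_2'=mA_2-A_1$ into $\bP^{N_1}\times\bP^{N_2}$, so that adding suitable multiples of the pullbacks of the two hyperplane classes to an arbitrary ample class $B$ on $Y$ cancels the unwanted component and yields $f^*B_i\in\bZ_{>0}A_i$. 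Without this step, even a correct irreducible construction carrying the right perverse sheaf would not let you conclude that any $\lambda_{k,j}(C(Y))$ actually varies with the embedding, so steps (3)--(5) of your outline cannot be completed as stated.
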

The proof of the theorem strongly relies the results of \cite{RSW}, where a necessary condition of dependent Lyubeznik numbers is given (Corollary \ref{cor_RSW}) and examples of reducible projective varieties satisfying such necessary condition are constructed (Proposition \ref{prop_ex}). Given such a reducible projective variety $X$, we construct an irreducible projective variety $Y$ that carries over the necessary condition of dependent Lyubeznik numbers from $X$ (Proposition \ref{prop_constr}). 

\textbf{Acknowledgment} We thank the anonymous referee for his/her detailed comments, especially about the history of the Lyubeznik numbers. We also thank Claudiu Raicu, Laurentiu Maxim and Uli Walther for helpful discussions. The author is partially supported by the NSF grant DMS-1701305.

\section{The results of Reichelt-Saito-Walther}
In this section, we briefly recall the main results of Reichelt-Saito-Walther (\cite{RSW}). 
The first one  is that the Lyubeznik numbers can be computed using certain perverse sheaves. 
\begin{prop}\label{prop_RSW}\cite[Proposition 1]{RSW}
	Let $C(X)$ be the affine cone of a projective variety $X$ with respect to some projective embedding. Then
	\[
	\lambda_{k, j}(C(X))=\dim_\bQ H^k i^!\big(\,^p\sH^{-j}(\bD\bQ_C)\big)
	\]
	where the map $i: 0\to C(X)$ is the inclusion of the origin to the affine cone, the functor $\bD: D^b_c(\bQ_{C(X)})\to D^b_c(\bQ_{C(X)})$ is the duality functor in the derived category of $\bQ$-constructible sheaves on $C(X)$ and $^p\sH^{-j}$ is taking the $(-j)$-th perverse cohomology\footnote{See \cite{D} for an introduction about constructible sheaves, the duality functor and perverse sheaves.}. 
\end{prop}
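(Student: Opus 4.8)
The plan is to transport the two algebraic operations in the definition of $\lambda_{k,j}$ across the Riemann--Hilbert correspondence: the local cohomology module $H^{n-j}_I R$ will become the perverse cohomology sheaf $\,^p\sH^{-j}(\bD\bQ_C)$, while the functor $\Ext^k_R(\bC,-)$ will become the topological costalk $H^k i^!$. Write $S=\bA^{n+1}_\bC$, $d=\dim S$, $\cO=\cO_S$, $\fm=(x_0,\dots,x_n)\subset R$; let $\kappa\colon C=C(X)\hookrightarrow S$ be the closed embedding, $j\colon U=S\setminus C\hookrightarrow S$ its open complement, and $i\colon\{0\}\hookrightarrow C$ the cone point. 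Let $\mathrm{DR}$ denote the de Rham functor on the Weyl algebra $\cD=\cD(S)$, normalized by $\mathrm{DR}(\cO)=\bQ_S[d]$. The first input is that each local cohomology module $H^m_I R$ is a regular holonomic $\cD$-module, so that $R\Gamma_{[C]}\cO$ lies in $D^b_{rh}(\cD)$ and $\mathrm{DR}$ restricts to a $t$-exact equivalence onto $D^b_c(\bQ_{C^{\an}})$ (standard $t$-structure to perverse $t$-structure) intertwining the six operations and Verdier duality.

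Next I would identify $\mathrm{DR}(R\Gamma_{[C]}\cO)$. Applying $\mathrm{DR}$ to the $\cD$-module local cohomology triangle $R\Gamma_{[C]}\cO\to\cO\to\int_j j^*\cO\xrightarrow{+1}$, and using $\mathrm{DR}(\cO)=\bQ_S[d]$ together with $\mathrm{DR}\circ\int_j=Rj_*\circ\mathrm{DR}$, produces the triangle $\mathrm{DR}(R\Gamma_{[C]}\cO)\to\bQ_S[d]\to Rj_*\,j^*\bQ_S[d]$. Comparing with the topological attachment triangle $\kappa_*\kappa^!\to\id\to Rj_*\,j^*$ applied to $\bQ_S[d]$ identifies $\mathrm{DR}(R\Gamma_{[C]}\cO)\cong\kappa_*\kappa^!\bQ_S[d]$. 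Finally $\kappa^!\bQ_S[d]=\omega_C[-d]=(\bD\bQ_C)[-d]$, since $\omega_S=\bQ_S[2d]$ and $\kappa^!\omega_S=\omega_C=\bD\bQ_C$; hence $\mathrm{DR}(R\Gamma_{[C]}\cO)\cong\kappa_*(\bD\bQ_C)[-d]$.

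Because $\mathrm{DR}$ is $t$-exact and $\kappa_*$ is perverse $t$-exact, passing to the $m$-th cohomology module (which on the $\cD$-side is the perverse cohomology $\sH^m$) gives, for every $m$,
\[
\mathrm{DR}\big(H^m_I R\big)\;\cong\;\kappa_*\,{}^{p}\sH^{\,m-d}(\bD\bQ_C).
\]
Taking $m$ equal to the local cohomology index entering $\lambda_{k,j}$, which in the present normalization is $\dim R-j$, and using $\dim R=d$, turns the right-hand side into $\kappa_*\,{}^{p}\sH^{-j}(\bD\bQ_C)$. This degree matching --- aligning the cohomological index of $H^{\bullet}_I R$ with the perverse index of $\bD\bQ_C$ --- is the single place where the normalizations of $\mathrm{DR}$, of $\bD$, and of the Lyubeznik index must be tracked with care, and I expect it to be the main bookkeeping obstacle.

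It remains to interpret $\Ext^k_R(\bC,-)$. For regular holonomic $M$, the complex $R\Hom_R(R/\fm,M)$, computed by the Koszul cochain complex on $x_0,\dots,x_n$, corresponds under Riemann--Hilbert to the exceptional restriction to the origin, giving $\Ext^k_R(\bC,M)\cong H^k\big(i^!\mathrm{DR}_S(M)\big)$; one fixes the normalization on $M=\cO$, where both sides are concentrated in degree $d$. Applying this with $M=H^{n-j}_I R$ and using $i^!\kappa_*=i^!$ (as $i$ factors through $C$ and $\kappa^!\kappa_*=\id$) finally yields
\[
\lambda_{k,j}(C(X))=\dim_\bC\Ext^k_R(\bC,H^{n-j}_I R)=\dim_\bQ H^k\big(i^!\,{}^{p}\sH^{-j}(\bD\bQ_C)\big),
\]
as claimed. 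The conceptual heart is the identification in the second and third paragraphs of the algebraic local cohomology complex with $\kappa_*\bD\bQ_C$ up to shift; granting regular holonomicity, the translation of $\Ext_R(\bC,-)$ into the costalk $i^!$ is then a formal consequence of Riemann--Hilbert.
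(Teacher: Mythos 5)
The paper itself offers no proof of this proposition --- it is quoted from \cite[Proposition 1]{RSW} --- so the only meaningful comparison is with the proof in that reference, and your argument follows the same route: regular holonomicity of local cohomology modules, the de Rham functor carrying the local cohomology triangle to the attachment triangle $\kappa_*\kappa^!\to\id\to Rj_*j^*$ (whence $\mathrm{DR}(R\Gamma_{[C]}\cO)\cong\kappa_*(\bD\bQ_C)[-d]$), $t$-exactness to pass to perverse cohomology, and the identification of $\Ext_R(\bC,-)$ with the topological costalk $i^!$. Two points of bookkeeping deserve attention. First, your degree matching uses the index $m=\dim R-j$, which is indeed the convention under which the displayed formula is true (and is the convention of \cite{RSW}); but note that it does not literally agree with the paper's definition in the introduction, which reads $H^{n-j}_I R$ while $\dim R=n+1$. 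That appears to be an off-by-one slip in the paper, and your proof silently corrects it --- with the definition taken verbatim, the right-hand side would have to be $^p\sH^{-j-1}(\bD\bQ_C)$. Second, the assertion that $\Ext^k_R(\bC,M)\cong H^k\big(i^!\mathrm{DR}(M)\big)$ for holonomic $M$ is not established by ``fixing the normalization on $M=\cO$'': checking one object does not produce a natural isomorphism of functors. The standard argument runs through Kashiwara's equivalence: $R\Hom_R(\bC,M)\cong R\Hom_R(\bC,R\Gamma_{\fm}M)$, the modules $H^p_{\fm}(M)$ are finite direct sums of copies of $E=H^{\dim R}_{\fm}(R)$, one has $\Ext^{>0}_R(\bC,E)=0$ so the spectral sequence degenerates, and $\mathrm{DR}(E)$ is the skyscraper at the origin, while $\mathrm{DR}$ intertwines $R\Gamma_{[0]}$ with $i_*i^!$. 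This is a standard fact and you identified the correct statement, so I would call your write-up correct in outline but under-justified at exactly this step; a careful version should also note that $\mathrm{DR}$ produces $\bC$-coefficient complexes, so one must remark that $\dim_\bQ$ of the $\bQ$-perverse costalk equals $\dim_\bC$ after extension of scalars.
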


\begin{defn}
	Let $X$ be a projective variety of dimension $d$ and let $\sF$ be a perverse sheaf on $X$. We say $\sF$ is \textbf{Lefschetz dependent in degree $k$} if the quantity
	\begin{equation*}
	\dim \ker\big(A: H^{k-1}(X, \sF)\to H^{k+1}(X, \sF)\big)+\dim \coker\big(A: H^{k-2}(X, \sF)\to H^k(X, \sF)\big)
	\end{equation*}
	depends on the choice of the ample class $A$ of $X$. 
\end{defn}
\begin{rmk}
	By the semi-continuity theorem, $\sF$ is Lefschetz dependent in degree $k$ if and only if one of 
	$$\dim \ker\big(A: H^{k-1}(X, \sF)\to H^{k+1}(X, \sF)\big)$$ and $$\dim \coker\big(A: H^{k-2}(X, \sF)\to H^k(X, \sF)\big)$$ depends on the choice of the ample class $A$.
\end{rmk}

As a consequence of Proposition \ref{prop_RSW}, we have the following. 
\begin{cor}\label{cor_RSW}\cite[Corollary 1]{RSW}
For a projective variety $X$, the Lyubeznik number $\lambda_{k, j}(C(X))$ of the affine cone of $X$ depends on the choice of the projective embedding if the perverse sheaf $^p\sH^{1-j}(\bD\bQ_X)$ is Lefschetz dependent in degree $k$ for some $k\geq 2, j\geq 1$. 
\end{cor}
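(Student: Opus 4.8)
The plan is to prove that, under the hypotheses, the integer $\lambda_{k,j}(C(X))$ is literally \emph{equal} to the Lefschetz--dependence quantity in degree $k$ of the perverse sheaf $\sF := {}^p\sH^{1-j}(\bD\bQ_X)$, so that dependence of the latter on the ample class immediately forces dependence of $\lambda_{k,j}$ on the embedding. Write $M := {}^p\sH^{-j}(\bD\bQ_C)$, let $i\colon\{0\}\hookrightarrow C$ be the vertex, $j\colon U:=C\setminus\{0\}\hookrightarrow C$ the open complement, so that by Proposition \ref{prop_RSW} one has $\lambda_{k,j}(C(X))=\dim_\bQ H^{k}i^{!}M$. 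The first step is geometric: the projection $\pi\colon U\to X$ realizes $U$ as the $\bC^{*}$-bundle obtained by deleting the zero section of the tautological line bundle $\sO_X(-1)$, whose Euler class is, up to sign, the ample class $A=c_1(\sO_X(1))$ of the chosen embedding. Since $\pi$ is smooth of relative dimension $1$, the functor $\pi^{*}[1]$ is perverse $t$-exact and $\bD\bQ_C|_{U}=\pi^{!}\bD\bQ_X=\pi^{*}\bD\bQ_X[2]$; restricting to the open $U$ commutes with perverse truncation, so taking $(-j)$-th perverse cohomology and using $t$-exactness gives the identification
\[
M|_{U}\;\cong\;\pi^{*}\sF[1].
\]

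The second step isolates the costalk at the vertex. Because $\bQ_C$, hence $\bD\bQ_C$ and each of its perverse cohomology sheaves, is monodromic for the scaling $\bC^{*}$-action contracting $C$ to $0$, the contraction lemma yields $R\Gamma(C,M)\cong i^{*}M$; and since $M$ is perverse while $\{0\}$ is a point, $i^{*}M$ sits in non-positive degrees, so $H^{a}(C,M)=0$ for all $a>0$. Feeding this into the long exact sequence of the attaching triangle $i_{*}i^{!}M\to M\to Rj_{*}(M|_{U})$, namely
\[
\cdots\to H^{k-1}(C,M)\to H^{k-1}(U,M|_{U})\to H^{k}i^{!}M\to H^{k}(C,M)\to\cdots,
\]
I would invoke $k\geq 2$ to kill the flanking terms $H^{k-1}(C,M)=H^{k}(C,M)=0$, obtaining $H^{k}i^{!}M\cong H^{k-1}(U,M|_{U})\cong H^{k}(U,\pi^{*}\sF)$, where the last isomorphism is the shift $M|_{U}\cong\pi^{*}\sF[1]$.

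Finally I would apply the Gysin (Wang) sequence of the $\bC^{*}$-bundle $\pi$ with coefficients in $\pi^{*}\sF$,
\[
\cdots\to H^{k-2}(X,\sF)\xrightarrow{\,A\,}H^{k}(X,\sF)\to H^{k}(U,\pi^{*}\sF)\to H^{k-1}(X,\sF)\xrightarrow{\,A\,}H^{k+1}(X,\sF)\to\cdots,
\]
whose connecting maps are cup product with the Euler class $A$. The resulting short exact sequence gives
\[
\dim H^{k}(U,\pi^{*}\sF)=\dim\coker\big(A\colon H^{k-2}(X,\sF)\to H^{k}(X,\sF)\big)+\dim\ker\big(A\colon H^{k-1}(X,\sF)\to H^{k+1}(X,\sF)\big).
\]
Combining the three steps, $\lambda_{k,j}(C(X))$ is exactly the quantity appearing in the definition of Lefschetz dependence of $\sF={}^p\sH^{1-j}(\bD\bQ_X)$ in degree $k$; hence if $\sF$ is Lefschetz dependent in degree $k$, this number varies with $A$, that is, with the embedding.

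The main obstacle, and the place where the genuine work lies, is the interface between the standard and perverse $t$-structures in the first two steps: carefully justifying the perverse $t$-exactness shift that converts ${}^p\sH^{-j}$ on $C$ into ${}^p\sH^{1-j}$ on $X$ (correctly accounting for the relative dimension $1$ of $\pi$), and establishing the contraction isomorphism $R\Gamma(C,M)\cong i^{*}M$ for the monodromic sheaf $M$ so that the vertex terms vanish once $k\geq 2$. After these inputs the Gysin computation is formal, and the hypotheses $k\geq 2$ and $j\geq 1$ are precisely what confines the argument to the range where the vertex contributions drop out and $\sF$ lies in the perverse degree in which Lefschetz dependence can occur.
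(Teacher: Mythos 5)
The paper itself offers no proof of this corollary --- it is quoted verbatim from \cite[Corollary 1]{RSW} with only the remark that it is ``a consequence of Proposition \ref{prop_RSW}'' --- so there is nothing internal to compare against; what you have written is a correct reconstruction of the derivation, and it follows the same route as \cite{RSW}: identify the punctured cone $U=C\setminus\{0\}$ as the $\bC^{*}$-bundle of $\sO_X(-1)$, use perverse $t$-exactness of $\pi^{*}[1]$ to get $M|_U\cong\pi^{*}\sF[1]$ with $\sF={}^p\sH^{1-j}(\bD\bQ_X)$, kill the global terms $H^{a}(C,M)$ for $a>0$ by the contraction lemma plus perversity of $M$ at the $0$-dimensional stratum (this is exactly where $k\geq 2$ enters), and finish with the Gysin sequence whose connecting map is cup product with $A$. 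The index bookkeeping ($-j$ on $C$ versus $1-j$ on $X$, the shift by the relative dimension $1$) is right, and the resulting identity $\lambda_{k,j}(C(X))=\dim\ker\big(A\colon H^{k-1}(X,\sF)\to H^{k+1}(X,\sF)\big)+\dim\coker\big(A\colon H^{k-2}(X,\sF)\to H^{k}(X,\sF)\big)$ is precisely the content of \cite[Corollary 1]{RSW}. One small point deserves a sentence you omitted: Lefschetz dependence gives two \emph{ample} classes with different values, whereas an embedding produces a \emph{very ample} class; since the kernel and cokernel dimensions are unchanged when $A$ is replaced by a positive multiple $mA$, one may pass to very ample multiples and hence to actual embeddings. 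With that remark added, the argument is complete.
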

In \cite{RSW}, reducible projective varieties with dependent Lyubeznik numbers were constructed. We will not repeat their construction. Instead, we state it as the following proposition. 
\begin{prop}\label{prop_ex}\cite[Section 2.1, 2.2]{RSW}
	For any integers $d_2>d_1\geq 2$, there exists a non-equidimensional projective variety $X$ with irreducible components of dimension $d_1+d_2$ and $d_1+d_2+1$, such that the perverse sheaf $^p\sH^{-d_1-d_2}(\bD\bQ_{C(X)})$ is Lefschetz dependent in degree $d_2-d_1+1$. 
\end{prop}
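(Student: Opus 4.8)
The plan is to prove the statement in three movements: first descend the assertion about the dualizing complex of the \emph{cone} to a Lefschetz-dependence statement for a genuine perverse sheaf on $X$ (so that the Definition even applies), then build a non-equidimensional $X$, and finally force the $A$-dependence by an explicit computation in the cohomology ring of a product of projective spaces. For the descent I would use the conical geometry directly. Writing $U=C(X)\setminus\{0\}$ and $\pi\colon U\to X$ for the resulting $\bG_m$-bundle (the complement of the zero section of $\cO_X(-1)$), one has $\omega_U=\pi^!\omega_X=\pi^*\omega_X[2]$ because $\pi$ is smooth of relative complex dimension one. Since $\pi^*[1]$ is perverse $t$-exact, restriction to $U$ yields
\[
{}^{p}\sH^{-d_1-d_2}(\bD\bQ_{C(X)})|_{U}\;\cong\;\pi^*[1]\,\cG,\qquad\cG:={}^{p}\sH^{1-d_1-d_2}(\bD\bQ_X),
\]
so the conical perverse sheaf on the cone is, up to the shift $\pi^*[1]$, the pullback of $\cG$, and the Euler class $c_1(\cO_X(1))$ — that is, the hyperplane class $A$ recording the embedding — becomes the Gysin/Lefschetz operator. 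Under this dictionary ``${}^{p}\sH^{-d_1-d_2}(\bD\bQ_{C(X)})$ is Lefschetz dependent in degree $k$'' is precisely the assertion that $\cG$ is Lefschetz dependent in degree $k=d_2-d_1+1$ on $X$, matching the object ${}^{p}\sH^{1-j}(\bD\bQ_X)$ of Corollary~\ref{cor_RSW} with $j=d_1+d_2$ (and $k\ge2$ holds since $d_2>d_1$). I am thus reduced to producing $X$ for which $\dim\ker\big(A\colon H^{k-1}(X,\cG)\to H^{k+1}(X,\cG)\big)+\dim\coker\big(A\colon H^{k-2}(X,\cG)\to H^{k}(X,\cG)\big)$ varies with $A$.

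The reason an irreducible smooth $X$ cannot work — and the reason one is driven into the non-equidimensional world — is that a semisimple perverse sheaf satisfies hard Lefschetz, so for such a sheaf the kernel and cokernel dimensions above are governed by the Betti numbers $\dim H^\bullet(X,\cG)$ alone and are $A$-independent. Hence $\cG$ must be non-semisimple, which forces $X$ singular. I would realize this with $X=X_1\cup X_2$, $\dim X_1=d_1+d_2+1$ and $\dim X_2=d_1+d_2$, taking $X_2=\bP^{d_1}\times\bP^{d_2}$ so that its cohomology $\bQ[x,y]/(x^{d_1+1},y^{d_2+1})$ supplies the two asymmetric directions. Dualizing the Mayer--Vietoris gluing triangle $\omega_{X_1\cap X_2}\to\omega_{X_1}\oplus\omega_{X_2}\to\omega_X\xrightarrow{+1}$ and passing to perverse cohomology realizes $\cG$ as a non-split extension $0\to\sF'\to\cG\to\IC_{X_2}\to 0$, where $\sF'$ is assembled from $X_1$ and the gluing locus $W=X_1\cap X_2$.

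The dependence then falls out of a single clean computation. On the pure constituent $\IC_{X_2}=\bQ_{X_2}[d_1+d_2]$ the operator $A=ax+by$ (with $a,b>0$) acts in the relevant degrees as multiplication by $ax+by$ on $H^{2d_2}(X_2)\to H^{2d_2+2}(X_2)$; hard Lefschetz on the smooth $X_2$ makes this surjective with a one-dimensional kernel, spanned by
\[
v(a,b)\;=\;\sum_{i=0}^{d_1}\Big(-\tfrac{a}{b}\Big)^{i}x^{i}y^{d_2-i},
\]
which genuinely \emph{moves} as the ratio $a:b$ varies, tracing a rational normal curve of degree $d_1$ in $\bP\big(H^{2d_2}(X_2)\big)$. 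Running the hypercohomology long exact sequence of the extension compatibly with $A$, this kernel line contributes to $\ker\big(A\colon H^{k-1}(X,\cG)\to H^{k+1}(X,\cG)\big)$ (equivalently, by the Remark following the Definition, to the cokernel in the neighbouring degree) precisely when $v(a,b)$ lies in the fixed subspace $\ker\delta\subset H^{2d_2}(X_2)$ cut out by the connecting map $\delta$ of the gluing. Since the moving curve $\{v(a,b)\}$ meets the fixed $\ker\delta$ for special ratios but not for generic ones, choosing $W$ so that an intersection occurs at some admissible ratio $-a/b<0$ makes $\dim\ker+\dim\coker$ jump by one, which is exactly Lefschetz dependence in degree $d_2-d_1+1$.

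The hard part will be the final calibration: selecting the component $X_1$ and the gluing locus $W$ so that (i) the extension defining $\cG$ is non-split with $\IC_{X_2}$ as a constituent, and (ii) the fixed subspace $\ker\delta$ meets the moving kernel curve $\{v(a,b)\}$ for some but not all ample classes — that is, actually exhibiting two embeddings realizing different values. Computing $\delta$ explicitly (for instance by taking $X_1$ to be a second product of projective spaces, or a projective bundle over a subvariety of $X_2$, meeting $X_2$ transversally along a distinguished cycle) and verifying both the non-splitness and that the resulting $\ker\delta$ genuinely crosses the curve $\{v(a,b)\}$ in the admissible range is the technical core; once $\delta$ is pinned down, the remaining steps are the formal homological bookkeeping of the Gysin and Mayer--Vietoris long exact sequences.
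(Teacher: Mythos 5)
First, a point of comparison: the paper does not prove this proposition at all --- it is quoted from \cite[Sections 2.1, 2.2]{RSW} ("we will not repeat their construction") --- so your attempt must be judged on its own mathematics. Your preliminary moves are sound: reading the statement as concerning ${}^p\sH^{1-j}(\bD\bQ_X)$ with $j=d_1+d_2+1$ (which is what Corollaries \ref{cor_RSW} and \ref{cor_dep} need), the degree bookkeeping identifying $H^{k-1}$ with $H^{2d_2}(\bP^{d_1}\times\bP^{d_2})$, the formula for the moving kernel line $v(a,b)$, and the remark that semisimplicity (hence hard Lefschetz) must fail, are all correct and are indeed the engine of the Reichelt--Saito--Walther examples.

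The genuine gap is that the variety is never constructed, and the framework inside which you defer the construction (``the final calibration'') cannot deliver it. Two structural errors first. (i) Since $\dim X_1=d_1+d_2+1$ and your proposed $X_1$'s are smooth, $\bD\bQ_{X_1}=\bQ_{X_1}[2\dim X_1]$ is perverse concentrated in degree $-(d_1+d_2+1)$, so nothing coming from $X_1$ can be a constituent of $\sG:={}^p\sH^{-(d_1+d_2)}(\bD\bQ_X)$; only $X_2$ and $W=X_1\cap X_2$ appear. (ii) The dualized Mayer--Vietoris triangle $\bD\bQ_W\to\bD\bQ_{X_1}\oplus\bD\bQ_{X_2}\to\bD\bQ_X$ places $\IC_{X_2}$ as a \emph{subobject} of $\sG$ with quotient supported on $W$ --- the opposite of your asserted extension $0\to\sF'\to\sG\to\IC_{X_2}\to0$. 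With the correct direction one can check case by case that no transverse gluing works: if $\codim_{X_2}W\geq 2$ then $\sG\cong\IC_{X_2}$, which satisfies hard Lefschetz for every ample class; if $W$ is a smooth divisor, then $\sG$ is either $\IC_{X_2}\oplus\IC_W$ (hard Lefschetz again) or the canonical non-split extension $Rj_*\bQ_{X_2\setminus W}[d_1+d_2]$, whose hypercohomology is $H^{\bullet+d_1+d_2}(X_2\setminus W)$. Every smooth divisor in $\bP^{d_1}\times\bP^{d_2}$ is ample or a pullback from a factor: in the ample case $X_2\setminus W$ is affine and Artin vanishing kills all groups in the relevant degrees $\geq 2d_2-1>d_1+d_2-1$; in the pullback case, say $W=D\times\bP^{d_2}$, the class $x$ restricts to $0$ on the complement, so $ax+by$ acts as $by$ and no kernel or cokernel can depend on $a:b$. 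In all cases the quantity in the definition is independent of $A$.

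What the mechanism actually requires is the extension in the direction you wrote down but cannot obtain from a union: $\IC_{X_2}$ as a \emph{quotient}, i.e.\ $j_!\bQ_{X_2\setminus V}[d_1+d_2]$, whose hypercohomology is the relative cohomology $H^\bullet(X_2,V)=\ker\big(H^\bullet(X_2)\to H^\bullet(V)\big)$. For $V$ a smooth $(1,1)$-divisor one computes $\ker\big(H^{2d_2}(X_2)\to H^{2d_2}(V)\big)=\ker\big((x+y)\colon H^{2d_2}\to H^{2d_2+2}\big)=\langle v(1,1)\rangle$, and then $\dim\ker(A)$ on this relative cohomology is $1$ if $a=b$ and $0$ otherwise --- genuine dependence in degree $d_2-d_1+1$. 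The entire content of the proposition, and of \cite{RSW}, is producing a projective non-equidimensional $X$ whose \emph{dualizing complex} contains such a $j_!$-type extension; duality shows a transverse union of smooth pieces always produces the $Rj_*$-type instead, and the naive fixes (e.g.\ identifying the two sections of $\bP(\sO\oplus\sO(1,1))$ over $\bP^{d_1}\times\bP^{d_2}$) fail to be projective for ampleness reasons. So the ``calibration'' you postpone is not homological bookkeeping: it is the theorem, and it is impossible in the setting you set up.
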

By Corollary \ref{cor_RSW}, the varieties in Proposition \ref{prop_ex} have dependent Lyubeznik numbers. In \cite[Section 2.3]{RSW}, much more complicated equidimensional reducible projective varieties with dependent Lyubeznik numbers were constructed. For the purpose of this paper, the non-equidimensional examples are sufficient. 



\section{The irreducible examples}
We state the desired construction as the following proposition, which may also be interesting for other purposes. 
\begin{prop}\label{prop_constr}
	Given any (possibly reducible and non-equidimensional) complex projective variety $X$ with two ample classes $A_1$ and $A_2$, there exists an irreducible projective variety $Y$ and a closed embedding $f: X\to Y$ such that
	\begin{enumerate}
		\item\label{i1} $f$ induces an isomorphism between $X$ and the singular locus of $Y$ (with the induced reduced structure);
		\item\label{i2} $^p\sH^i(\bD\bQ_Y)$ is isomorphic to $^p\sH^{i+1}\big(\bD\bQ_{f(X)}\big)$ for $i>-\dim Y$;
		\item\label{i3} there exist ample classes $B_1, B_2$ of $Y$ such that $f^*B_1$ is a multiple of $A_1$ and $f^*B_2$ is a multiple of $A_2$. 
	\end{enumerate}
\end{prop}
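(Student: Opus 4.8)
The plan is to realize $Y$ as a self-glued smooth variety, so that its normalization is smooth and the passage from $X$ to $Y$ is governed by a single Mayer--Vietoris sequence. First I would replace $A_1,A_2$ by large multiples so that each is very ample, giving closed embeddings $\phi_i\colon X\hookrightarrow\bP^{N_i}$ with $N_i\gg0$, and combine them into the closed embedding $(\phi_1,\phi_2)\colon X\hookrightarrow P:=\bP^{N_1}\times\bP^{N_2}$ with image $\Delta\cong X$; both projections $\pi_i\colon P\to\bP^{N_i}$ restrict to the finite maps $\phi_i$ on $\Delta$. I would then choose a smooth irreducible projective variety $\widetilde Y_0\subseteq P$ containing $\Delta$ on which both projections remain finite, set $\widetilde Y:=\widetilde Y_0\times\bP^1$, and define $Y$ to be the variety obtained from $\widetilde Y$ by gluing the two disjoint copies $X':=\Delta\times\{0\}$ and $X'':=\Delta\times\{1\}$ along the identity $X'\cong X\cong X''$ (a Ferrand pushout along the finite map $X'\sqcup X''\to X$). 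The gluing map $\nu\colon\widetilde Y\to Y$ is then finite and birational, and $\widetilde Y$ is the normalization of $Y$.

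With this $Y$ in hand, part \eqref{i1} is immediate: $Y$ is smooth away from the glued locus, while along $X$ it consists of two smooth branches meeting, so $\sing Y=X$ and its reduction is $X$. For part \eqref{i2} I would use that, since the two sheets are globally labelled by the coordinate $0$ versus $1$ on the $\bP^1$ factor, there is no monodromy interchanging the two points of a fibre of $\nu$ over $X$; hence the adjunction map fits into a short exact sequence $0\to\bQ_Y\to\nu_*\bQ_{\widetilde Y}\to (i_X)_*\bQ_X\to 0$ whose cokernel is the constant, untwisted sheaf. Applying $\bD$ and using that $\nu$ is finite (so $\nu_*$ is perverse $t$-exact) together with the smoothness of $\widetilde Y$ (so that $\nu_*\bQ_{\widetilde Y}[2\dim Y]$ is concentrated in perverse degree $-\dim Y$), the resulting long exact sequence of perverse cohomology degenerates for $i>-\dim Y$ into the connecting isomorphism ${}^p\sH^i(\bD\bQ_Y)\cong {}^p\sH^{i+1}(\bD\bQ_{f(X)})$, which is exactly \eqref{i2}; the shift by one is produced by the connecting homomorphism.

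For part \eqref{i3} I would exploit the finite projections: $p_i:=\pi_i|_{\widetilde Y_0}\colon\widetilde Y_0\to\bP^{N_i}$ is finite, so $p_i^*\cO(1)$ is ample on $\widetilde Y_0$ and restricts to a multiple of $A_i$ on $\Delta\cong X$. The class $p_i^*\cO(1)\boxtimes\cO_{\bP^1}(1)$ on $\widetilde Y$ is ample and has equal restrictions to $X'$ and $X''$, so it descends along $\nu$ to a line bundle $B_i$ on $Y$; since $\nu$ is finite, $B_i$ is ample, and $f^*B_i$ is the chosen multiple of $A_i$, as required.

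The main obstacle is the construction of $\widetilde Y_0$ in the first paragraph: it must be smooth while containing the possibly singular and non-equidimensional $\Delta$, and it must stay finite over both factors. Finiteness forces $\dim\widetilde Y_0\le\min(N_1,N_2)$, which is harmless once the $A_i$ are replaced by high enough multiples; the delicate point is smoothness along $\Delta$. Since a smooth variety containing $\Delta$ must have dimension at least the maximal Zariski-tangent dimension $t:=\max_{x\in X}\dim T_xX$, I would fix $m$ with $t\le m\le\min(N_1,N_2)$ and cut $\widetilde Y_0$ out as a general complete intersection of hypersurfaces through $\Delta$ of dimension $m$. For high multiples the twisted conormal sheaf of $\Delta$ is globally generated, so the differentials of the chosen equations can be made linearly independent at every point of $\Delta$; a Bertini argument over $P\setminus\Delta$ together with this tangent-space count along $\Delta$ then yields that the general such complete intersection is smooth and irreducible, and a further general-position argument makes it finite over both factors.
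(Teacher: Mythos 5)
Your homological strategy is the same as the paper's: in both arguments $Y$ is a variety whose normalization $\nu\colon\widetilde Y\to Y$ is smooth and is two\--to\--one exactly over a copy of $X$, one dualizes the sequence $0\to\bQ_Y\to\nu_*\bQ_{\widetilde Y}\to\bQ_{f(X)}\to 0$, and \eqref{i2} falls out of the perverse long exact sequence because $\bD(\nu_*\bQ_{\widetilde Y})$ is perverse up to shift. The geometric construction, however, is genuinely different. The paper never has to produce a low\--dimensional smooth variety containing the (possibly very singular) $X$: it writes $\varphi(X)\subset\bP^{N_1}\times\bP^{N_2}$ as the zero locus of a section $s$ of a split vector bundle $E$, takes the family of graphs $\Gamma(\lambda s)\subset E$ over $\lambda\in\bC$, and identifies the parameters $\lambda=\pm 1$ via the normalization of a nodal cubic; the two branches through $X$ are then $\Gamma(s)$ and $\Gamma(-s)$, i.e.\ two copies of the ambient $\bP^{N_1}\times\bP^{N_2}$ itself, which automatically meet exactly along $\{s=0\}=\varphi(X)$. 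For \eqref{i3} the paper embeds by the twisted classes $mA_1-A_2$ and $mA_2-A_1$ and corrects an arbitrary ample $B$ by adding the base\--point\--free classes $g_i^*H_i$, whereas you obtain \eqref{i3} directly from finiteness of the two projections on $\widetilde Y_0$. Your route buys a lower\--dimensional, more transparent $Y$ and a cleaner proof of \eqref{i3}, at the cost of the existence problem for $\widetilde Y_0$.

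That existence problem is the one substantive gap, and by your own account it carries the weight of the proof. Two specific points. First, the bound $m\ge t=\max_{x}\dim T_xX$ does not suffice for the general\--position argument you invoke: to make $d_xf_1,\dots,d_xf_c$ simultaneously independent at every $x\in\Delta$, the failure locus attached to the stratum $\Delta_e=\{x:\dim T_xX\ge e\}$ has codimension at least $(N_1+N_2-e)-c+1-\dim\Delta_e=m-e+1-\dim\Delta_e$ in the parameter space, so you need $m\ge e+\dim\Delta_e$ for every $e$ (e.g.\ $m\ge t+\dim X$), not merely $m\ge t$. This is harmless to fix since the $N_i$ can be taken arbitrarily large, but as stated the claim that a general complete intersection of dimension $t$ through $\Delta$ is smooth is false in general. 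Second, finiteness of $\widetilde Y_0$ over both factors is essential to your proof of \eqref{i3} (otherwise $p_i^*\cO(1)$ is only nef and $B_i$ is not ample), and the phrase \enquote{a further general\--position argument} is doing real work: you must exclude, for a single choice of $f_1,\dots,f_c$, a positive\--dimensional component of $V(f_\bullet)\cap(\{x\}\times\bP^{N_2})$ for \emph{every} $x\in\bP^{N_1}$ simultaneously (and symmetrically), which requires a uniform incidence count over the families of positive\--dimensional cycles in the fibers, using $c\ge N_2$ and $d\gg 0$. I believe both points can be repaired, so the approach is viable, but in its present form the key lemma is asserted rather than proved; the paper's vector\--bundle construction is precisely a device for avoiding it.
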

We first construct the variety $Y$ and the map $f$. By Serre's vanishing theorem, for sufficiently large integer $m$, both $mA_1-A_2$ and $mA_2-A_1$ are very ample classes. Let $A_1'=mA_1-A_2$ and $A_2'=mA_2-A_1$. Let $\varphi_1: X\to \bP^{N_1}$ and $\varphi_2: X\to \bP^{N_2}$ be the projective embedding maps associated to very ample classes $A'_1$ and $A'_2$ respectively. Define $\varphi\stackrel{\text{def}}{=}(\varphi_1, \varphi_2): X\to \bP^{N_1}\times \bP^{N_2}$. 
\begin{lemma}
	Under the above notations, there exists a vector bundle $E$ on $\bP^{N_1}\times \bP^{N_2}$ and a global section $s$ of $E$ such that the zero locus of $s$ is equal to $\varphi(X)$. 
\end{lemma}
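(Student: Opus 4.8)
The plan is to first upgrade $\varphi$ to a closed embedding, then realize its image as the common zero locus of finitely many sections of a single sufficiently positive line bundle, and finally package those sections into one section of a direct sum bundle $E$.

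First I would check that $\varphi=(\varphi_1,\varphi_2)$ is itself a closed embedding. Since $\varphi_1$ is the projective embedding attached to the very ample class $A_1'$, it is a closed immersion, and it factors as $\varphi_1=\pi_1\circ\varphi$, where $\pi_1\colon \bP^{N_1}\times\bP^{N_2}\to\bP^{N_1}$ is the projection. Because $\pi_1$ is separated and $\pi_1\circ\varphi$ is a closed immersion, the standard cancellation fact for closed immersions forces $\varphi$ to be a closed immersion as well. Set $M=\bP^{N_1}\times\bP^{N_2}$ and let $Z=\varphi(X)$, a closed subvariety of $M$, with reduced ideal sheaf $\cI_Z\subset\cO_M$.

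Next I would invoke Serre's vanishing theorem on the product $M$. Since $\cO_M(1,1)$ is ample, for all sufficiently large integers $a,b$ the twisted ideal sheaf $\cI_Z\otimes\cO_M(a,b)$ is globally generated; choose global sections $s_0,\dots,s_r\in H^0\big(M,\cI_Z(a,b)\big)$ that generate it. I then take $E=\cO_M(a,b)^{\oplus(r+1)}$ and $s=(s_0,\dots,s_r)\in H^0(M,E)$. It remains to verify that the zero locus of $s$ is exactly $Z$. Set-theoretically, a point $p\in M$ lies in the zero locus iff every $s_i$ vanishes at $p$: if $p\in Z$ then every section of $\cI_Z(a,b)$ vanishes at $p$, while if $p\notin Z$ then $(\cI_Z)_p=\cO_{M,p}$, so the $s_i$ generate the whole stalk of $\cO_M(a,b)$ at $p$ and at least one is a unit there. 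The same stalk-wise computation identifies the ideal sheaf of the zero scheme, which is locally generated by the $s_i$ after trivializing $\cO_M(a,b)$, with $\cI_Z$, so the equality even holds scheme-theoretically.

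The only substantive input is the global generation of $\cI_Z(a,b)$, which is routine Serre theory on the ample variety $M$, so there is no serious obstacle. The one point demanding care is the scheme structure: working with the reduced ideal sheaf $\cI_Z$ ensures that the zero locus of $s$ is the reduced image $\varphi(X)$ and not some nonreduced thickening of it. Everything else in the argument is formal.
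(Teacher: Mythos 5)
Your proof is correct and follows essentially the same strategy as the paper: realize $\varphi(X)$ as the common zero locus of finitely many sections of line bundles on $\bP^{N_1}\times\bP^{N_2}$ and package them as one section of their direct sum. The paper simply asserts that $\varphi(X)$ is an intersection of divisors $D_1,\dots,D_l$ and takes $E=\sO(D_1)\oplus\cdots\oplus\sO(D_l)$, whereas you justify the existence of such sections via global generation of $\cI_Z(a,b)$ and verify that $\varphi$ is a closed immersion -- details the paper leaves implicit.
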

\begin{proof}
	Suppose $\varphi(X)$ is equal to the intersection of divisors $D_1, \ldots, D_l$ of $\bP^{N_1}\times \bP^{N_2}$. For each $D_i$, there is a global section $s_i$ of $\sO(D_i)$ whose zero locus is equal to $D_i$. The section $s=(s_1, \ldots, s_l)$ of the vector bundle $E=\sO(D_1)\oplus \cdots \oplus \sO(D_l)$ has zero locus $\varphi(X)$. 
\end{proof}

Let $E$ and $s$ be as in the preceding lemma. Consider $E$ as a quasi-projective variety. We define a subvariety $Y'$ of $E\times \bC$ by $Y'=\{(\eta, \lambda)\in E\times \bC\mid \eta\in \Gamma(\lambda s)\}$, where $\Gamma(\lambda s)$ is the image of the section $\lambda s$ in $E$. Let $C$ be the plane conic curve defined by $y^2=x^2+x^3$, and let $\pi: \bC\to C, \lambda\mapsto (\lambda^2-1, \lambda^3-\lambda)$ be the normalization map. Denote the nodal point of $C$ by $Q$. Then $\pi$ maps both $-1$ and $1$ to the nodal point, and $\pi$ is an isomorphism away from $Q$. 

We define $Y''$ to be the image of $Y'$ under the map $(\id, \pi): E\times \bC\to E\times C$. There exists a natural map $g': Y''\to \bP^{N_1}\times \bP^{N_2}$ as the composition of the projection to $E$ and the vector bundle map $E\to \bP^{N_1}\times \bP^{N_2}$. Notice that the singular locus of $Y''$ is contained in $E\times \{Q\}$, and hence closed in $Y''$. Thus, we can construct a projective compactification $Y$ of $Y''$ such that $Y$ is smooth along $Y\setminus Y''$. Additionally, using the resolution of singularty, we can assume that the map $g'$ extends to $g: Y\to \bP^{N_1}\times \bP^{N_2}$. 

Let $\psi: X\to E$ be the composition of $\varphi: X\to \bP^{N_1}\times \bP^{N_2}$ and the embedding $\bP^{N_1}\times \bP^{N_2}\to E$ as the zero section. We define the map $f: X\to Y$ by $f(P)=(\psi(P), Q)$. Next, we prove $Y$ and $f$ satisfy the there properties in Proposition \ref{prop_constr}. 

\begin{proof}[Proof of Property (\ref{i1})]
	By our construction, the restriction $(\id_E, \pi)|_{Y'}: Y'\to Y''$ is an isomorphism away from $\psi(X)\times \{Q\}$, and it is two-to-one along $\psi(X)\times \{Q\}$. Therefore, $Y''$ is smooth away from $\psi(X)\times \{Q\}$. The variety $Y''$ is singular along $\psi(X)\times \{Q\}$, because it is analytically locally reducible there. Since $Y$ is smooth along $Y\setminus Y''$, property (\ref{i1}) follows. 
\end{proof}
\begin{proof}[Proof of Property (\ref{i2})]
	Denote the normalization of $Y$ by $\tilde{Y}$, and denote the normalization map by $q: \tilde{Y}\to Y$. By the construction, $q$ is isomorphic to the map $(\id_E, \pi)|_{Y'}: Y'\to Y''$ over $Y''$, and it induces an isomorphism over $Y\setminus Y''$. Therefore, we have a short exact sequence of constructible sheaves,
	\begin{equation*}
	0\to \bQ_Y \to q_*\bQ_{\tilde{Y}}\to \bQ_{f(X)}\to 0.
	\end{equation*}
	Applying the Verdier duality functor $\bD$, we have a distinguished triangle
	$$
	\bD \bQ_{f(X)}\to \bD (q_*\bQ_{\tilde{Y}})\to \bD \bQ_Y\to\bD \bQ_{f(X)}[1].
	$$
	Taking perverse cohomology gives rise to a long exact sequence
	$$\cdots\to  \,^p\sH^i(\bD (q_*\bQ_{\tilde{Y}}))\to \,^p\sH^i(\bD \bQ_Y)\to \,^p\sH^{i+1}(\bD \bQ_{f(X)})\to \,^p\sH^{i+1}(\bD q_*\bQ_{\tilde{Y}})\to \cdots$$
	By definition, the normalization of $Y''$ is isomorphic to $Y'$, which is smooth. Moreover, $Y$ is smooth along $Y\setminus Y''$. Therefore, $\tilde{Y}$ is smooth, and hence $\bQ_{\tilde{Y}}[\dim Y]$ is perverse. Since $q$ is a finite morphism, $q_*\bQ_{\tilde{Y}}[\dim Y]$ is also perverse. Since the duality functor preserves perverse sheaves, $^p\sH^i(\bD (q_*\bQ_{\tilde{Y}}))=0$ for any $i\neq -\dim Y$. By the long exact sequence, for any $i>-\dim Y$, we have $^p\sH^i(\bD \bQ_Y)\cong \,^p\sH^{i+1}(\bD \bQ_{f(X)})$. 
\end{proof}
\begin{proof}[Proof of Property (\ref{i3})]
	Choose an ample class $B$ of $Y$. Since $\pi(1)=Q$, the inclusion $\varphi(X)\subset Y$ factors as
	$$\varphi(X)\subset \Gamma(s)\times \{Q\}\subset Y''\subset Y.$$
	Therefore, the ample class $f^*B$ is equal to the pullback of an ample class on $\Gamma(s)\times \{Q\}$.
	Notice that $\Gamma(s)$, as the total space of a section of $E$, is naturally isomorphic to $\bP^{N_1}\times \bP^{N_2}$. Therefore, $f^*B$ is a positive combination of $A_1'$ and $A_2'$, i.e., $f^*B=\alpha A_1+\beta A_2$ for some $\alpha, \beta\in \bZ_{>0}$. 
	
	Let $g_1$ (resp. $g_2$) be the composition of $g: Y\to \bP^{N_1}\times \bP^{N_2}$ and the projection $\bP^{N_1}\times \bP^{N_2}\to \bP^{N_1}$ (resp. $\bP^{N_1}\times \bP^{N_2}\to \bP^{N_2}$). Denote the hyperplane classes of $\bP^{N_1}$ and $\bP^{N_2}$ by $H_1$ and $H_2$ respectively. Since $g_1^*H_1$ and $g_2^*H_2$ are the divisor classes of base-point-free line bundles on $Y$, the divisor class $B+\mu g_1^*H_1+\nu g_2^*H_2$ is ample on $Y$ for any $\mu, \nu\in \bZ_{\geq 0}$. 
	
	Since $f^*(g_1^*H_1)=A_1'$ and $f^*(g_2^*H_2)=A_2'$, we have
	\[
	f^*(B+\alpha g_2^*H_2)=\alpha A_1+\beta A_2+\alpha(m A_2-A_1)=(\beta+\alpha m)A_2
	\]
	is a multiple of $A_2$. Thus, $B_2\stackrel{\text{def}}{=}B+\alpha g_2^*H_2$ satisfies the desired properties. Similarly, we can let $B_1\stackrel{\text{def}}{=}B+\beta g_1^*H_1$. 
\end{proof}

\begin{cor}\label{cor_dep}
	Suppose $X$ is a (possibly reducible and non-equidimensional) projective variety such that $^p\sH^{1-j}(\bD\bQ_X)$ is Lefschetz dependent in degree $k$ with $k\geq 2, j\geq 1$.
	Let $Y$ be as in the preceding proposition with the appropriate choice of $A_1$ and $A_2$. Then $Y$ has dependent $(k, j+1)$-Lyubeznik number. 
\end{cor}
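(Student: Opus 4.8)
The plan is to reduce everything to Corollary \ref{cor_RSW}. Since $k\geq 2$ and $(j+1)\geq 2\geq 1$, that corollary shows $Y$ has dependent $(k,j+1)$-Lyubeznik number as soon as the perverse sheaf ${}^p\sH^{1-(j+1)}(\bD\bQ_Y)={}^p\sH^{-j}(\bD\bQ_Y)$ is Lefschetz dependent in degree $k$. So the whole task is to transport the Lefschetz dependence of $\sF:={}^p\sH^{1-j}(\bD\bQ_X)$ across the construction of Proposition \ref{prop_constr}. The first step is to apply property (\ref{i2}) with $i=-j$, which gives an isomorphism of perverse sheaves on $Y$
\[
{}^p\sH^{-j}(\bD\bQ_Y)\;\cong\;{}^p\sH^{1-j}(\bD\bQ_{f(X)}).
\]
This requires $-j>-\dim Y$; I would check $j<\dim Y$ from the fact that $\sF\neq 0$ forces $1-j\geq -\dim X$, hence $j\leq \dim X+1\leq \dim Y$, with equality only in the excluded case $X=\bP^{N_1}\times\bP^{N_2}$, where hard Lefschetz precludes Lefschetz dependence.

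The second step is to identify hypercohomology together with its Lefschetz action. Let $\iota\colon f(X)\hookrightarrow Y$ be the closed embedding. Since $\bD_Y\iota_*=\iota_*\bD_{f(X)}$ and $\iota_*$ is exact for the perverse $t$-structure, ${}^p\sH^{1-j}(\bD\bQ_{f(X)})=\iota_*\sG_0$ with $\sG_0:={}^p\sH^{1-j}(\bD_{f(X)}\bQ_{f(X)})$ a perverse sheaf on $f(X)$; and since $f\colon X\xrightarrow{\sim}f(X)$ is an isomorphism, $\sG_0$ corresponds to $\sF$. Because $\iota_*$ is exact and $R\Gamma(Y,\iota_*(-))=R\Gamma(f(X),-)$, I obtain natural isomorphisms
\[
H^\bullet\big(Y,{}^p\sH^{-j}(\bD\bQ_Y)\big)\cong H^\bullet(f(X),\sG_0)\cong H^\bullet(X,\sF)
\]
that are compatible with the module structures over the cohomology rings via restriction along $\iota$ and pullback along $f$. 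The key consequence is that cup product with an ample class $B\in H^2(Y)$ on the left corresponds to cup product with $f^*B\in H^2(X)$ on the right; hence the Lefschetz-dependence quantity in degree $k$ for ${}^p\sH^{-j}(\bD\bQ_Y)$ at $B$ equals that for $\sF$ at the ample class $f^*B$.

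The final step uses property (\ref{i3}). By hypothesis $\sF$ is Lefschetz dependent in degree $k$, so I may choose $A_1,A_2$ to be ample classes on $X$ at which the Lefschetz quantity of $\sF$ takes different values; constructing $Y$ from these produces ample classes $B_1,B_2$ on $Y$ with $f^*B_1$ a positive multiple of $A_1$ and $f^*B_2$ a positive multiple of $A_2$. Replacing an ample class by a positive integer multiple scales each Lefschetz operator by a nonzero scalar and so leaves every kernel and cokernel dimension unchanged; therefore the quantity for $\sF$ at $f^*B_1$ (resp.\ $f^*B_2$) equals its value at $A_1$ (resp.\ $A_2$), and these differ. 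By the previous step the quantity for ${}^p\sH^{-j}(\bD\bQ_Y)$ differs at $B_1$ and $B_2$, so ${}^p\sH^{-j}(\bD\bQ_Y)$ is Lefschetz dependent in degree $k$, and Corollary \ref{cor_RSW} finishes the proof.

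I expect the main obstacle to be the compatibility asserted in the second step: one must verify that the abstract isomorphism produced by property (\ref{i2}) really does intertwine cup product by $B$ on $Y$ with cup product by $f^*B$ on $X$. This rests on the naturality of hypercohomology as a module over the cohomology ring, together with the support reduction ${}^p\sH^{1-j}(\bD\bQ_{f(X)})=\iota_*\sG_0$ and the identity $\bD_Y\iota_*=\iota_*\bD_{f(X)}$. By comparison, the bound $j<\dim Y$ and the scaling-invariance of kernel and cokernel dimensions are routine.
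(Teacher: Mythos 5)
Your proof is correct and follows essentially the same route as the paper's, which simply combines properties (2) and (3) of Proposition \ref{prop_constr} with Corollary \ref{cor_RSW}. The details you supply --- the bound $j<\dim Y$ needed to invoke property (2), the compatibility of the hypercohomology identification with the Lefschetz operators under the closed embedding, and the invariance of the kernel/cokernel dimensions under scaling an ample class --- are all points the paper leaves implicit.
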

\begin{proof}
	Since $f$ is a closed embedding, it induces an isomorphism between $X$ and $f(X)$. By properties (\ref{i2}) and (\ref{i3}), with the appropriate choice of $A_1$ and $A_2$, we know $^p\sH^{-j}(\bD\bQ_Y)$ is Lefschetz dependent in degree $k$. By Corollary \ref{cor_RSW}, the variety $Y$ has dependent $(k, j+1)$-Lyubeznik number.
\end{proof}

\begin{proof}[Proof of Theorem \ref{thm_main}]
	The theorem follows immediately from Proposition \ref{prop_ex} and Corollary \ref{cor_dep}. 
\end{proof}

All the examples of varieties with dependent Lyubeznik numbers here and in \cite{RSW} are obtained by gluing smooth varieties along subvarieties. In particular, they are not analytically locally irreducible. So a challenging question is the following. 

\begin{question}
	Does there exist a normal projective variety $X$, such that the Lyubeznik numbers of the affine cone $\lambda_{k, j}(C(X))$ depend on the choice of the projective embedding of $X$?
\end{question}


\end{document}